\newtheorem{theorem}{Theorem}
\newtheorem{lemma}[theorem]{Lemma}
\title{A counterexample to a result on the \\ tree graph of a graph\footnote{Partially supported by CONACyT  M\'exico, projects 169407 and 178910.}}
\author{ANA PAULINA FIGUEROA  \\ {\it Departamento de Matem\'aticas }\\\textit{ Instituto Tecnol\'ogico Aut\'onomo de M\'exico} \\ {\tt ana.figueroa@itam.mx}\\ \\ EDUARDO RIVERA-CAMPO  \\ {\it Departamento de Matem\'aticas }\\\ \textit{Universidad Aut\'onoma Metropolitana-Iztapalapa} \\  {\tt erc@xanum.uam.mx} }
\date{}
\begin{document}
\maketitle

\begin{abstract}
Given a set of cycles $C$ of a graph $G$, the tree graph of $G$ defined by $C$ is the graph $T(G,C)$ whose vertices are the spanning trees of $G$ and in which two trees $R$ and $S$ are adjacent if $R \cup S$ contains exactly one cycle and this cycle  lies in $C$. Li et al [Discrete Math 271 (2003), 303--310] proved that if the graph $T(G,C)$ is connected, then $C$ cyclically spans the cycle space of $G$.  Later, Yumei Hu [Proceedings of the 6th International Conference on Wireless Communications Networking and Mobile Computing (2010), 1--3] proved that if $C$ is an arboreal family of cycles of $G$ which cyclically spans the cycle space of a $2$-connected graph $G$, then $T(G, C)$ is connected.  In this note we present an infinite family of counterexamples to Hu's result. 
\end{abstract}

\section{Introduction}

The \emph {tree graph} of a connected graph $G$ is the graph $T(G)$ whose vertices are the spanning trees of $G$, in which two trees $R$ and $S$ are adjacent if $R \cup S$ contains exactly one cycle. Li et al \cite{LNR}  defined the \emph {tree graph of $G$ with respect to a set of cycles $C$} as the spanning subgraph $T(G,C)$ of $T(G)$ where two trees $R$ and $S$ are adjacent only if  the unique cycle contained in $R \cup S$ lies in $C$. 

A set of cycles $C$ of $G$ \emph{cyclically spans}  the cycle space of $G$ if for each cycle $\sigma$ of $G$ there are cycles $\alpha_1, \alpha_2, \ldots, \alpha_m \in C$ such that: $\sigma = \alpha_1 \Delta \alpha_2 \Delta \dots \Delta \alpha_m$ and, for $i =2, 3, \ldots, m$, $\alpha_1 \Delta \alpha_2 \Delta \dots \Delta \alpha_i$ is a cycle of $G$. Li et al \cite{LNR} proved the following theorem:

\begin{theorem} \label{Li1} If $C$ is a set of cycles of a connected graph $G$ such that the graph $T(G,C)$ is connected, then $C$ cyclically spans the cycle space of $G$.  
\end{theorem}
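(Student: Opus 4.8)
The plan is to fix an arbitrary cycle $\sigma$ of $G$ and produce a cyclic spanning sequence for it directly from a path in $T(G,C)$. First I would choose an edge $e$ of $\sigma$ and a spanning tree $T$ of $G$ with $\sigma \setminus \{e\} \subseteq T$; since $\sigma \setminus \{e\}$ is a path it is acyclic, so such a tree exists, and the fundamental cycle of $e$ with respect to $T$ is exactly $\sigma$. I would also fix any spanning tree $T'$ containing $e$, for instance $T - f + e$ for an edge $f \in \sigma \cap T$. Because $T(G,C)$ is connected, there is a path $T = T_0, T_1, \ldots, T_k = T'$ in $T(G,C)$; here consecutive trees $T_{i-1}, T_i$ differ by a single edge exchange, and the unique cycle $\gamma_i$ of $T_{i-1} \cup T_i$ belongs to $C$.

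The key step is a transition lemma for fundamental cycles. If $R$ and $S$ are adjacent in $T(G,C)$ with unique cycle $\gamma \in C$, and $e$ is a chord of both $R$ and $S$, then the fundamental cycle of $e$ with respect to $S$ equals either the fundamental cycle of $e$ with respect to $R$ or its symmetric difference with $\gamma$; in particular both candidates are genuine cycles. I would prove this by writing $S = R - a + b$ with $\gamma$ the fundamental cycle of $b$ in $R$, and expressing the fundamental cycle of $e$ in $S$ in the fundamental-cycle basis of $R$, where it acquires the summand $\gamma$ precisely when its $S$-path is forced to reroute through $b$. I would also record the boundary case: if $e$ is itself the edge swapped into the tree, so that $e$ is a chord of $R$ but an edge of $S$, then $\gamma$ is exactly the fundamental cycle of $e$ with respect to $R$.

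With these in hand I would let $j$ be the first index for which $e \in T_j$. Then $e$ is a chord of $T_0, \ldots, T_{j-1}$; writing $\sigma_i$ for the fundamental cycle of $e$ with respect to $T_i$, the transition lemma gives $\sigma_i \in \{\sigma_{i-1}, \sigma_{i-1} \Delta \gamma_i\}$ for $1 \le i \le j-1$, with $\sigma_0 = \sigma$ and every $\sigma_i$ a cycle, while the boundary case applied at step $j$ gives $\sigma_{j-1} = \gamma_j \in C$. Reading the chain backwards from $\sigma_{j-1}$ to $\sigma_0$ and discarding the steps where $\sigma_i = \sigma_{i-1}$ yields a sequence $\alpha_1 = \gamma_j, \alpha_2, \ldots, \alpha_m \in C$ whose successive partial symmetric differences run through $\sigma_{j-1}, \sigma_{j-2}, \ldots, \sigma_0 = \sigma$; since each of these is a cycle, this is exactly a cyclic spanning of $\sigma$. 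As $\sigma$ was arbitrary, $C$ cyclically spans the cycle space of $G$. I expect the main obstacle to be the transition lemma, specifically verifying cleanly that a single edge exchange alters a fundamental cycle by at most one symmetric difference with $\gamma$, together with the care needed at the entry step $j$ to seed the telescoping with a cycle that already lies in $C$.
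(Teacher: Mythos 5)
Your proposal is correct, but note that the paper you are being compared against does not actually prove Theorem \ref{Li1}: it is quoted from Li, Neumann-Lara and Rivera-Campo \cite{LNR} without proof, so there is no internal argument to measure yours against. Your argument is the natural (and, as far as I can tell, essentially the original) one: pick $e\in\sigma$ and a tree $T\supseteq\sigma\setminus\{e\}$ so that $\sigma$ is the fundamental cycle of $e$, walk along a $T$--$T'$ path in $T(G,C)$, and track the fundamental cycle of $e$. The transition lemma is sound --- writing $T_i=T_{i-1}-a+b$ and expanding the fundamental cycle of $e$ in $T_i$ in the fundamental-cycle basis of $T_{i-1}$, the only chords of $T_{i-1}$ available are $e$ and $b$, which gives exactly $\sigma_i\in\{\sigma_{i-1},\,\sigma_{i-1}\Delta\gamma_i\}$ --- and the boundary case at the first index $j$ with $e\in T_j$ correctly identifies $\sigma_{j-1}=\gamma_j\in C$, seeding the reversed telescoping with a member of $C$. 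One phrase is a harmless overclaim: ``both candidates are genuine cycles'' need not hold (when $\sigma_i=\sigma_{i-1}$ the other candidate $\sigma_{i-1}\Delta\gamma_i$ may be a disjoint union of cycles), but your argument only ever uses the candidate that equals $\sigma_i$, which is a fundamental cycle and hence a cycle, so nothing breaks. Also note $e\notin T_0$ is automatic (otherwise $T_0$ would contain all of $\sigma$), so the index $j$ exists; with that observation the proof is complete.
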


A set of cycles $C$ of a graph $G$ is  \emph{arboreal} with respect to $G$ if for every spanning tree $T$ of $G$, there is a cycle $\sigma \in C$ which is a fundamental cycle of $T$. Yumei Hu  \cite{H}  claimed to have proved the converse theorem:

\begin{theorem} \label{Hu}
Let $G$ be a $2$-connected graph. If $C$ is an arboreal set of cycles of $G$ that cyclically spans the cycle space of $G$, then $T(G,C)$ is connected.
\end{theorem}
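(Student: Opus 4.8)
The plan is to show that any two spanning trees $R$ and $S$ of $G$ can be joined by a path in $T(G,C)$, arguing by induction on $|R \setminus S|$, the number of edges of $R$ not in $S$. When this number is zero the trees coincide, so the inductive step must produce a neighbour $R'$ of $R$ in $T(G,C)$ with $|R' \setminus S| < |R \setminus S|$. In the full tree graph $T(G)$ such a move always exists: pick $e \in S \setminus R$, let $\sigma = C_R(e)$ be its fundamental cycle with respect to $R$, choose an edge $g \in \sigma \cap (R \setminus S)$ (one exists because $\sigma$ is a cycle and the acyclic set $S$ cannot contain it, while $\sigma \setminus \{e\} \subseteq R$), and swap to $R' = R + e - g$. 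Then $R'$ is a spanning tree, $R \cup R' = R + e$ has $\sigma$ as its unique cycle, and $|R' \setminus S| = |R \setminus S| - 1$. The entire difficulty is that the swap cycle $\sigma$ need not belong to $C$, so $R$ and $R'$ need not be adjacent in $T(G,C)$.

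First I would use the hypothesis that $C$ cyclically spans the cycle space to rewrite $\sigma = \alpha_1 \Delta \alpha_2 \Delta \cdots \Delta \alpha_m$ with every $\alpha_i \in C$ and every partial symmetric difference $\alpha_1 \Delta \cdots \Delta \alpha_i$ a cycle of $G$. The aim would then be to replace the single illegal swap along $\sigma$ by a sequence of legal swaps, one per cycle $\alpha_i$, each an edge of $T(G,C)$ because its swap cycle lies in $C$, whose cumulative effect exchanges $e$ for $g$. Here I would invoke the arboreal hypothesis to guarantee that every intermediate spanning tree actually possesses a fundamental cycle lying in $C$, so that no vertex reached along the way is isolated in $T(G,C)$ and there is always a cycle of $C$ available to continue the construction.

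The main obstacle, and the step I expect to be genuinely delicate, is the passage from the algebraic decomposition of $\sigma$ to an actual walk in $T(G,C)$. Cyclic spanning is a statement purely about the cycle space: it tells us that the partial sums $\alpha_1 \Delta \cdots \Delta \alpha_i$ are cycles \emph{as edge sets}, but it says nothing about whether there is a spanning tree at each stage for which the next cycle $\alpha_{i+1}$ is realised as the fundamental cycle of a single edge swap, nor whether such swaps can be chained so that the net effect is exactly the exchange of $e$ for $g$. Likewise, the arboreal hypothesis only guarantees that each tree has \emph{some} neighbour in $T(G,C)$; it does not guarantee a neighbour that makes progress toward $S$. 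Reconciling these two local guarantees into a single globally connecting path is precisely the point at which the argument must either succeed or break down, and I would scrutinise it most carefully. Indeed, since the present note exhibits counterexamples, this is exactly where I expect the converse to fail: the set of spanning trees reachable from $R$ in $T(G,C)$ can be a proper, nonempty subset of all spanning trees even when both hypotheses hold, so that minimum degree at least one together with algebraic generation of the cycle space falls short of forcing connectivity.
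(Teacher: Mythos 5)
You cannot prove this statement, because it is false: the entire point of the paper is to exhibit counterexamples to it, and your own closing paragraph correctly anticipates this. The paper offers no proof of Theorem~\ref{Hu}; instead it constructs the octahedron graph $G$ with the set $C$ consisting of the internal faces of a plane drawing, with the face $\alpha$ replaced by $\alpha \Delta \beta$. Theorem~\ref{cyclically} shows $C$ cyclically spans the cycle space, a counting argument on the $12$ edges shows $C$ is arboreal, and yet the three spanning trees $T$, $S$, $R$ of Figure~\ref{fig:TRS0} form a connected component of $T(G,C)$ of size $3$, since $\rho$ is the only cycle of $C$ that arises as a fundamental cycle of any of them. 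Hence $T(G,C)$ is disconnected, and the construction extends to graphs $G_n$ on $3(n+2)$ vertices for every $n \geq 0$.

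Your diagnosis of where the argument must break is exactly right, and it is worth making it sharper. The arboreal hypothesis gives every vertex of $T(G,C)$ at least one neighbour, but the neighbours available to $T$, $S$ and $R$ in the counterexample all stay inside the set $\{T,S,R\}$: a fundamental cycle in $C$ exists, but every legal swap along it lands back in the same small component, so ``some neighbour'' never becomes ``a neighbour closer to the target tree.'' Likewise, the cyclic-spanning decomposition $\sigma = \alpha_1 \Delta \cdots \Delta \alpha_m$ is a purely algebraic fact about edge sets; nothing ties the intermediate cycles $\alpha_1 \Delta \cdots \Delta \alpha_i$ to fundamental cycles of any spanning tree reachable from $R$, so the decomposition cannot be lifted to a walk in $T(G,C)$. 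The only defensible conclusion from the two hypotheses is the one direction that is actually true (Theorem~\ref{Li1}: connectivity of $T(G,C)$ implies cyclic spanning); the converse you were asked to prove fails, and you should replace the proof attempt by the counterexample.
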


In this note we present a counterexample to Theorem \ref{Hu} given by a triangulated plane graph $G$ with 6 vertices and an arboreal family of cycles $C$ of $G$ such that $C$ cyclically spans the cycle space of $G$, while $T(G, C)$ is disconnected.  Our example generalises to a family of triangulated graphs $G_n$ with $3(n+2)$ vertices for each integer $n\geq 0$.

If $\alpha$ is a face of a plane graph $G$, we denote, also by $\alpha$, the corresponding cycle of $G$ as well as the set of edges of $\alpha$.

\section{Preliminary results}

Let $G$ be a plane graph. For each cycle $\tau$, let $k(\tau)$ be the number of faces of $G$ contained in the interior of $\tau$.  A \emph{diagonal edge} of $\tau$ is an edge lying in the interior of $\tau$ having both vertices in $\tau$. The following lemma will be used in the proof of Theorem \ref{cyclically}.

\begin{lemma} \label{twocycles}
Let $G$ be a triangulated plane graph and $\sigma$ be a cycle of $G$. If $k(\sigma) \geq 2$, then there are two faces $\phi$ and $\psi$ of $G$, contained in the interior of $\sigma$, both with at least one edge in common with $\sigma$, and such that $\sigma \Delta \phi$ and $\sigma \Delta \psi$ are cycles of $G$.
\end{lemma}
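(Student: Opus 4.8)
The plan is to prove the lemma by induction on $k(\sigma)$, after first pinning down exactly which interior faces $\phi$ of $\sigma$ satisfy the requirement that $\sigma \Delta \phi$ be a cycle. Since $G$ is triangulated, every face in the interior of $\sigma$ is a triangle and every edge of $\sigma$ bounds exactly one such interior triangle. I would first establish a characterization: if $\phi$ is an interior triangle sharing at least one edge with $\sigma$, then $\sigma \Delta \phi$ is a cycle if and only if either (i) $\phi$ shares two edges with $\sigma$, or (ii) $\phi$ shares exactly one edge $uv$ of $\sigma$ and its third vertex $w$ does not lie on $\sigma$. This is a routine degree count in the symmetric difference: in case (i) the common vertex of the two shared edges drops to degree $0$ and one obtains a shorter cycle, while in case (ii) the edge $uv$ is replaced by the two edges $uw$ and $wv$ through the new vertex $w$; in the one remaining case, where $\phi$ meets $\sigma$ in a single edge but $w \in \sigma$, the vertex $w$ acquires degree $4$, so $\sigma \Delta \phi$ fails to be a cycle. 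I will call a boundary triangle satisfying (i) or (ii) \emph{good}; the lemma then asks for two good faces.

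The induction on $k(\sigma)$ splits according to whether $\sigma$ has a diagonal edge. If $\sigma$ has no diagonal, then no boundary triangle can have its third vertex on $\sigma$ (such a vertex would produce a chord), and no boundary triangle can share two edges with $\sigma$ unless $\sigma$ is itself a triangle; since $k(\sigma) \geq 2$, each of the $|\sigma| \geq 3$ boundary triangles meets $\sigma$ in exactly one edge with its third vertex interior, hence is good, and distinct edges yield distinct such triangles. This already gives at least two good faces and requires no inductive call.

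If $\sigma$ has a diagonal $d = xy$, I would use it to cut the interior of $\sigma$ into two regions bounded by cycles $\sigma_1$ and $\sigma_2$, where $\sigma_i$ consists of $d$ together with one of the two arcs of $\sigma$ between $x$ and $y$; here $k(\sigma_1) + k(\sigma_2) = k(\sigma)$ and $k(\sigma_1), k(\sigma_2) \geq 1$. The goal is to produce one good face of $\sigma$ inside each region; since the two regions have disjoint interiors these faces are automatically distinct. A region with $k(\sigma_i) = 1$ is a single triangle meeting $\sigma$ in two arc-edges, i.e.\ an \emph{ear}, which is good by (i). In a region with $k(\sigma_i) \geq 2$ the induction hypothesis supplies two good faces of $\sigma_i$; since $d$ bounds only one face inside that region, at least one of these good faces avoids $d$, and I would argue that a good face of $\sigma_i$ not containing $d$ is automatically good for $\sigma$.

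The step I expect to be the main obstacle is precisely this transfer of goodness from $\sigma_i$ to $\sigma$ for a face $\phi$ avoiding $d$. The point is that such a $\phi$ shares no edge with the outer arc of $\sigma$ and does not contain $d$, so its intersection with $\sigma$ and with $\sigma_i$ consists of the same edges; moreover a vertex interior to $\sigma_i$ is interior to $\sigma$, so the position of the third vertex in case (ii) is preserved. Hence the shared-edge count and the apex condition carry over and $\phi$ remains good for $\sigma$. Once the characterization of good faces and this transfer property are in place, the two cases close the induction, with the no-diagonal case and the ear subcase (both non-recursive) serving as the effective base.
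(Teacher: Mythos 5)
Your proof is correct and takes essentially the same route as the paper's: induction on $k(\sigma)$ with a case split on whether $\sigma$ has a diagonal, cutting along a diagonal into $\sigma_1$ and $\sigma_2$, recursing on each piece, and discarding the face incident to the diagonal so that goodness transfers back to $\sigma$. The only real divergence is the chordless case, where the paper picks a vertex of $\sigma$ with interior neighbours and takes the two extreme triangles of its fan while you observe that every boundary triangle is good; both arguments work, and your explicit characterization of when $\sigma \Delta \phi$ is a cycle makes the transfer step more transparent than the paper's implicit version.
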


\begin{proof}

If $k(\sigma)=2$, let $\phi$ and $\psi$ be the two faces of $G$ contained in the interior of $\sigma$. Clearly $\sigma \Delta \phi = \psi$  and $\sigma \Delta \psi=\phi$ which are cycles of $G$. 

Assume $k=k(\sigma) \geq 3$ and that the result holds for each cycle $\tau$ of $G$ with $2 \leq k(\tau) < k$. If $\sigma$ has a diagonal edge $uv$, then $\sigma$ together with the edge $uv$ define two cycles $\sigma_1$ and $\sigma_2$ such that $k(\sigma)=k(\sigma_1)+k(\sigma_2)$. If $\sigma_1$ is a face of $G$, then 
$\sigma \Delta \sigma_1$ is a cycle of $G$ and, if $k(\sigma_1) \geq 2$, then by induction there are two faces 
$\phi_1$ and $\psi_1$ of $G$, contained in the interior of $\sigma_1$, both with at least one edge in common with $\sigma_1$, and such that $\sigma_1 \Delta \phi_1$ and $\sigma_1 \Delta \psi_1$ are cycles of $G$. Without loss of generality, we assume $uv$ is not an edge of $\phi_1$ and therefore $\phi=\phi_1$ has at least one edge in common with $\sigma$ and is such that $\sigma \Delta \phi$ is a cycle of $G$.  Analogously $\sigma_2$  contains a face $\psi$ with at least one edge in common with $\sigma$ and such that $\sigma \Delta \psi$ is a cycle of $G$.

For the remaining of the proof we may assume $\sigma$ has no diagonal edges. Since $k=k(\sigma) \geq 3$, there is a vertex $u$ of $\sigma$ which is incident with one or more edges lying in the interior of $\sigma$. Let $v_0, v_1, \ldots, v_m$ be the vertices in $\sigma$ or in the interior of $\sigma$ which are adjacent to $u$. Without loss of generality we assume $v_0$ and $v_m$ are vertices of $\sigma$ and that $v_0,v_1, \ldots, v_{m-1}, v_m$ is a path joining $v_0$ and $v_m$, see Figure \ref{fig:dentro}.

\begin{figure}[ht] 
\begin{center}
 \includegraphics[width=.4\textwidth]{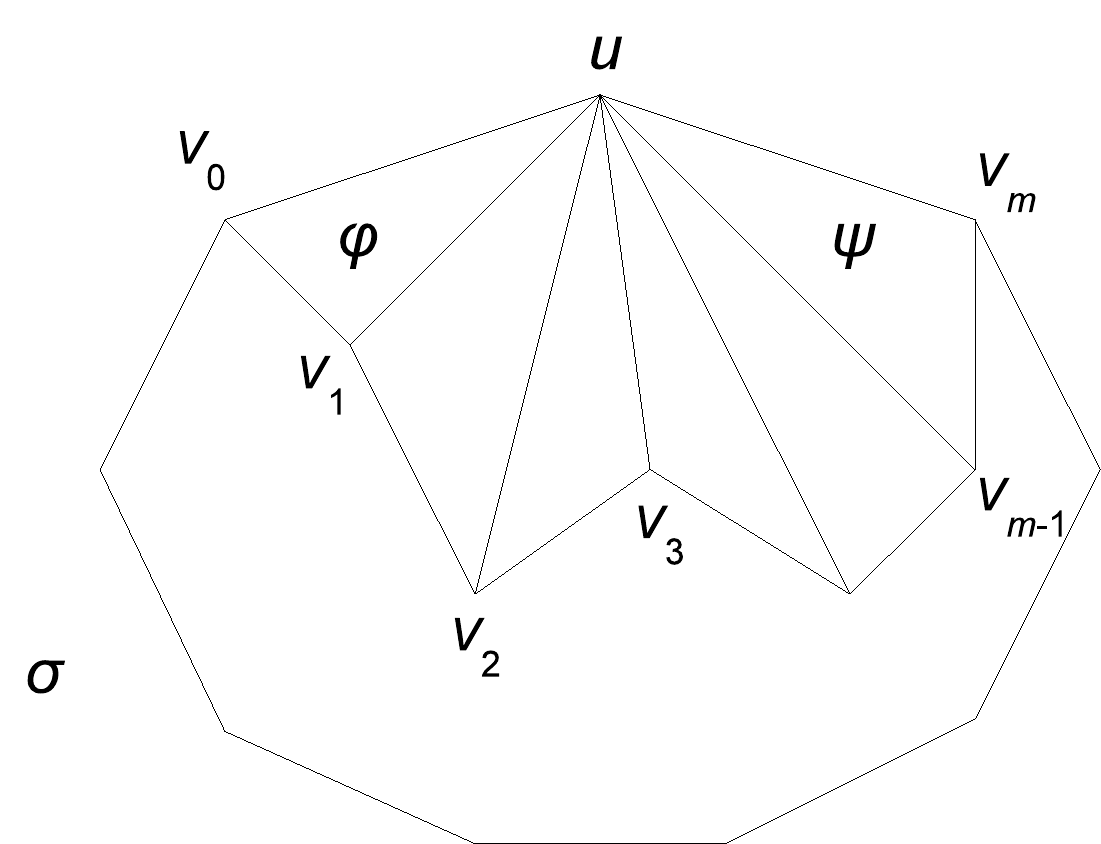}
 \caption{\small  Cycle $\sigma$ with no diagonal edges.}
  \label{fig:dentro} 
\end{center}
\end{figure}

As $\sigma$ has no diagonal edges, vertices $v_1, v_2, \ldots, v_{m-1}$ are not vertices of $\sigma$ and therefore faces $\phi = uv_0v_1$ and $\psi = uv_mv_{m-1}$ are such that $\sigma \Delta \phi$ and $\sigma \Delta \psi$ are cycles of $G$, each with one edge in common with $\sigma$.

\end{proof}

\begin{theorem} \label{cyclically}
Let $G$ be a triangulated  plane graph and $\alpha$ and $\beta$ be two internal faces of $G$ with one edge in common. If $C$ is the set of internal faces of $G$ with cycle $\alpha$ replaced by the cycle $\alpha \Delta \beta$, then $C$ cyclically spans the cycle space of $G$. 
\end{theorem}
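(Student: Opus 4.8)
The plan is to prove the statement by induction on $k(\sigma)$, the number of internal faces of $G$ lying inside a cycle $\sigma$, showing that every cycle $\sigma$ of $G$ is cyclically spanned by $C$. Since $G$ is triangulated, every internal face is a triangle, and because $\alpha$ and $\beta$ are internal faces sharing exactly one edge $e$, the symmetric difference $q=\alpha\Delta\beta$ is a single $4$-cycle; thus $C$ consists of all internal triangular faces except $\alpha$, together with the quadrilateral $q$. Throughout I will use the fact that removing from a cycle $\sigma$ a face $\phi$ lying inside $\sigma$ with $\sigma\Delta\phi$ a cycle decreases the face count by exactly one, i.e.\ $k(\sigma\Delta\phi)=k(\sigma)-1$.

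For the base case $k(\sigma)=1$ the cycle $\sigma$ is a single internal face. If $\sigma\neq\alpha$ then $\sigma\in C$ and the one-term sequence $\sigma$ works. If $\sigma=\alpha$, I would use the identity $\alpha=q\Delta\beta$: the sequence $q,\beta$ has both terms in $C$, and its only nontrivial partial sum $q\Delta\beta=\alpha$ is a cycle, so $\alpha$ is cyclically spanned by $C$. This is the only place where the replacement of $\alpha$ by $q$ is used directly.

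For the inductive step $k(\sigma)=k\geq 2$, I would apply Lemma~\ref{twocycles} to obtain two faces $\phi$ and $\psi$, both inside $\sigma$, each sharing an edge with $\sigma$, and with $\sigma\Delta\phi$ and $\sigma\Delta\psi$ cycles. The key point is that $\phi$ and $\psi$ are \emph{distinct}, so at least one of them, say $\phi$, is different from $\alpha$ and hence belongs to $C$. Setting $\tau=\sigma\Delta\phi$, we have $k(\tau)=k-1$, so by the induction hypothesis $\tau=\gamma_1\Delta\cdots\Delta\gamma_r$ with every $\gamma_i\in C$ and every partial sum a cycle. Appending $\phi$ gives $\sigma=\gamma_1\Delta\cdots\Delta\gamma_r\Delta\phi$; the partial sums of this longer sequence are exactly those coming from $\tau$ (all cycles) followed by $\sigma$ itself (a cycle), and every term lies in $C$. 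Hence $\sigma$ is cyclically spanned by $C$.

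The main obstacle I anticipate is justifying that Lemma~\ref{twocycles} really produces two \emph{distinct} peelable faces, since this distinctness is exactly what guarantees that one of them avoids $\alpha$ and keeps the induction inside $C$. Inspecting the proof of Lemma~\ref{twocycles}, the two faces lie on opposite sides of a diagonal in the diagonal case, and are $\phi=uv_0v_1$, $\psi=uv_mv_{m-1}$ with $v_0\neq v_m$ on $\sigma$ in the diagonal-free case (here $m\geq 2$ because $u$ is incident to an interior edge, whose other endpoint is an interior vertex $v_i$ with $1\le i\le m-1$), so in every case $\phi\neq\psi$. Once distinctness is settled, the rest of the argument is the routine bookkeeping above: the appended peel keeps all partial sums cyclic automatically, because both $\tau$ and $\sigma$ are cycles.
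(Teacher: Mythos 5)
Your proof is correct and follows essentially the same induction on $k(\sigma)$ as the paper: the identical base case $\alpha=(\alpha\Delta\beta)\Delta\beta$, and the identical inductive step of peeling off a face $\phi\neq\alpha$ supplied by Lemma~\ref{twocycles}. Your explicit verification that the two faces from the lemma are distinct is a nice touch the paper leaves implicit.
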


\begin{proof}
Let $\sigma$ be a cycle of $G$. If $k(\sigma) =1$, then $\sigma \in C$ or $\sigma = \alpha$ in which case $\sigma = (\alpha \Delta \beta) \Delta \beta$. In both cases $\sigma$ is cyclically spanned by $C$. 

We proceed by induction assuming $k = k(\sigma) \geq 2$ and that if $\tau$ is a cycle of $G$ with $k(\tau) <  k$, then $\tau$ is cyclically spanned by $C$.

By Lemma \ref{twocycles}, there are two faces $\phi$ and $\psi$ of $G$, contained in the interior of $\sigma$, such that both $\sigma \Delta \phi$ and $\sigma \Delta \psi$ are cycles of $G$; without loss of generality we assume $\phi \neq \alpha$. Clearly $k(\sigma \Delta \phi) < k$;  by induction, there are cycles $\tau_1, \tau_2, \ldots, \tau_m \in C$ such that: $\sigma \Delta \phi = \tau_1 \Delta \tau_2 \Delta \dots \Delta \tau_m$ and, for $i =2, 3, \ldots, m$, $\tau_1 \Delta \tau_2 \Delta \dots \Delta \tau_i$ is a cycle of $G$. As $\sigma = (\sigma \Delta \phi) \Delta \phi = \tau_1 \Delta \tau_2 \Delta \dots \Delta \tau_m \Delta \phi$, cycle $\sigma$ is also cyclically spanned by $C$. 
\end{proof}

\section{Main result}

Let $G$ be the skeleton graph of a octahedron (see Figure \ref{fig:G'0})  and $C$ be the set of cycles that correspond to the internal faces of $G$ with cycle $\alpha$ replaced by cycle $\alpha \Delta \beta$.

\begin{figure}[ht] 
\begin{center}
 \includegraphics[width=.3\textwidth]{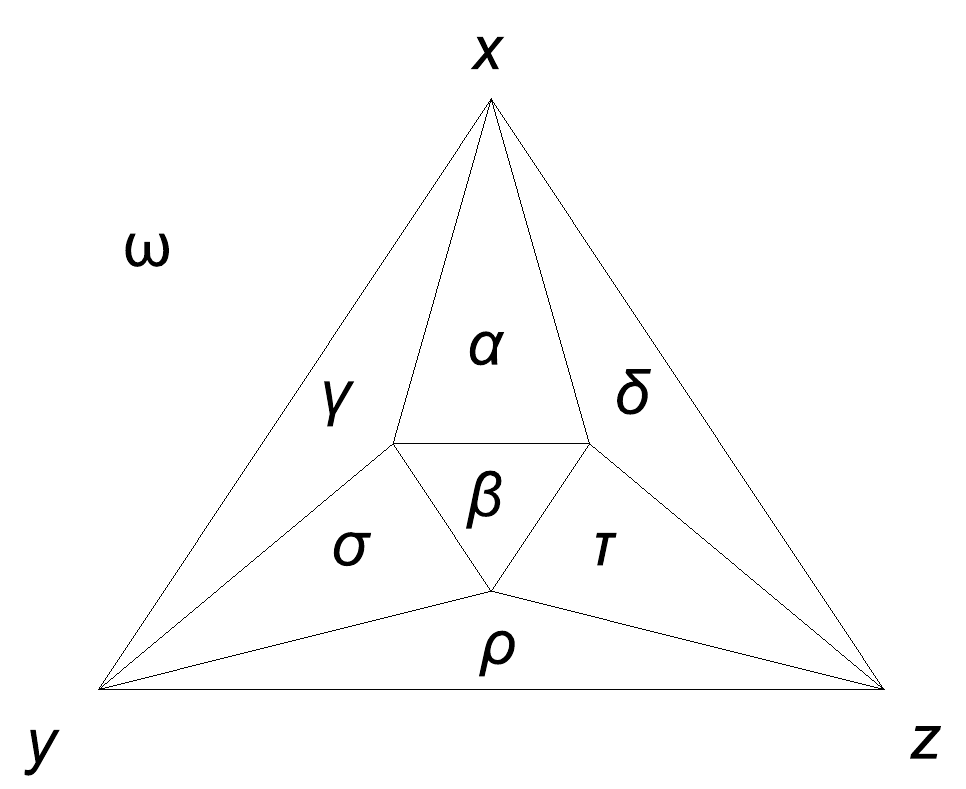}
 \caption{\small Graph $G$ with internal faces $\alpha, \beta, \gamma, \delta, \sigma, \tau$ and $\rho$ and outer face $\omega$.}
  \label{fig:G'0} 
\end{center}
\end{figure}

By Theorem \ref{cyclically}, $C$  cyclically spans the cycle space of $G$. Suppose $C$ is not arboreal and let $P$ be a spanning tree of  $G$ with none of its fundamental cycles  in $C$.  For this to happen, each of the cycles $\beta, \gamma, \delta, \sigma, \tau$ and $\rho$ of $G$, must have at least  two edges which are not edges of $P$ and since $P$ has no cycles, at least one edge of cycle $\alpha$ and at least one edge of cycle $\omega$ are not edgs of $P$. 

Therefore $G$ has at least $7$ edges which are not edges of $P$. These, together with the 5 edges of $P$ sum up to 12 edges which is exactly the number of  edges of $G$. This implies that each of the cycles $\omega$ and $\alpha$ has exactly two edges of $P$ and that each of the cycles $\beta, \gamma, \delta, \sigma, \tau$ and $\rho$ has exactly one edge of $P$. 

If edges $xy$ and $xz$ are edges of $P$, then vertex $x$ cannot be incident to any other edge of $P$ and therefore cycle $\alpha$ can only have one edge of $P$, which is not possible.

If edges $xy$ and $yz$ are edges of $P$, then vertex $y$ cannot be incident to any other edge of $P$. In this case, the edge in cycle $\sigma$, opposite to vertex $y$, must be an edge of $P$ and cannot be incident to any other edge of $P$, which  again is not possible. The case where edges $xz$ and $yz$ are edges of $P$ is analogous. Therefore $C$ is an arboreal set of cycles of $G$.

Let $T$, $S$ and $R$ be the spanning trees of $G$ given in Figure \ref{fig:TRS0}. The graph $T(G, C)$ has a connected component formed by the trees $T$, $S$ and $R$ since cycle $\rho$ is the only cycle in $C$ which is a fundamental cycle of either $T$, $S$ or $R$. This implies that $T(G, C)$ is disconnected. 

\begin{figure}[ht] 
\begin{center}
 \includegraphics[width=.8\textwidth]{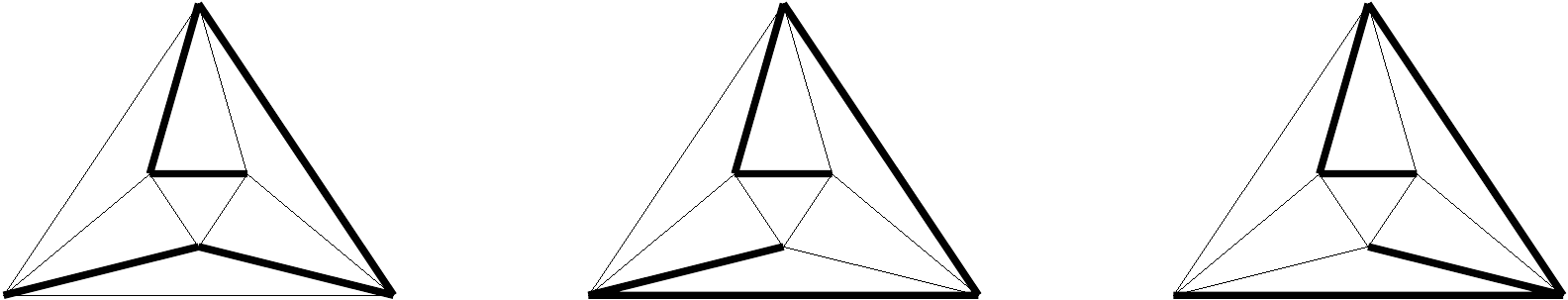}
 \caption{\small Trees $T$ (left), $S$ (center) and $R$ (right).}
  \label{fig:TRS0} 
\end{center}
\end{figure}

We proceed to generalise the counterexample to graphs with arbitrary large number of vertices. Let $G_0 = G, x_0=x, y_0=y, z_0=z$ and for $t \geq 0$ define $G_{t+1}$  as the graph obtained  by placing a copy of $G_t$ in the inner face of the skeleton graph of an octahedron as in Figure  \ref{fig:G'(t+1)}.

\begin{figure}[ht] 
\begin{center}
 \includegraphics[width=.5\textwidth]{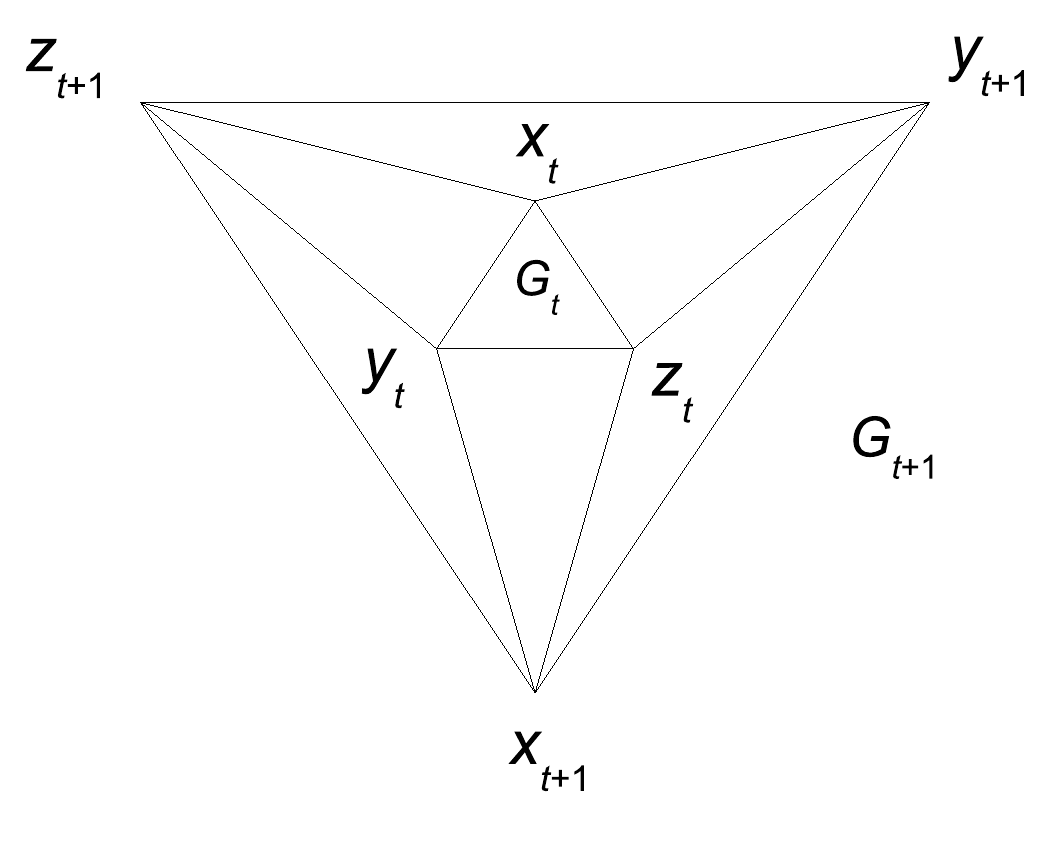}
 \caption{\small $G_{t+1}$.}
 \label{fig:G'(t+1)} 
\end{center}
\end{figure}

Notice that each graph $G_n$ contains a copy $G'$ of $G$ in the innermost layer. We also denote by $\alpha, \beta, \gamma, \delta, \sigma, \tau$ and $\rho$ the cycles of $G_n$ that correspond to the cycles $\alpha, \beta, \gamma, \delta, \sigma, \tau$ and $\rho$ of $G'$. Let $\omega_{n}$ denote the cycle given by the edges in the outer face of $G_{n}$. 

For $n  \geq 0$ let $C_n$ be the set of cycles that correspond to the internal faces of $G_n$ with cycle $\alpha$ replaced by cycle $\alpha \Delta \beta$. By Theorem \ref{cyclically}, $C_n$  cyclically spans the cycle space of $G_n$. 

We claim that for $n \geq 0$, set $C_n$ is an arboreal set of cycles of $G_n$. Suppose $C_{t}$ is arboreal but  $C_{t+1}$ is not and let $P_{t+1}$ be a spanning tree of $G_{t+1}$ such that none of its fundamental cycles lies in $C_{t+1}$. 

As in the case of graph $G$ and tree $P$, above, each cycle in $C_{t+1}$, other than  $\alpha \Delta \beta$,  has exactly one edge in $P_{t+1}$,  while cycles $\alpha$ and $\omega_{t+1}$ have exactly two edges in $P_{t+1}$. The reader can see that this implies that the edges of $P_{t+1}$ which are not edges of $G_t$ form a path with length 3. Then $P_{t+1}-\left\{x_{t+1}, y_{t+1}, z_{t+1} \right\}$ is a spanning tree of $G_t$ and by induction, one of its fundamental cycles lies in $C_t \subset C_{t+1}$ which is a contradiction. Therefore $C_{t+1}$ is arboreal.

Let $T_0 = T$ and for $t \geq 0$ define $T_{t+1}$ as the spanning tree of $G_{t+1}$ obtained by placing a copy of $T_t$ in the inner face of the skeleton graph of an octahedron as in Figure \ref{fig:T'(t+1)}.

\begin{figure}[ht] 
\begin{center}
 \includegraphics[width=1\textwidth]{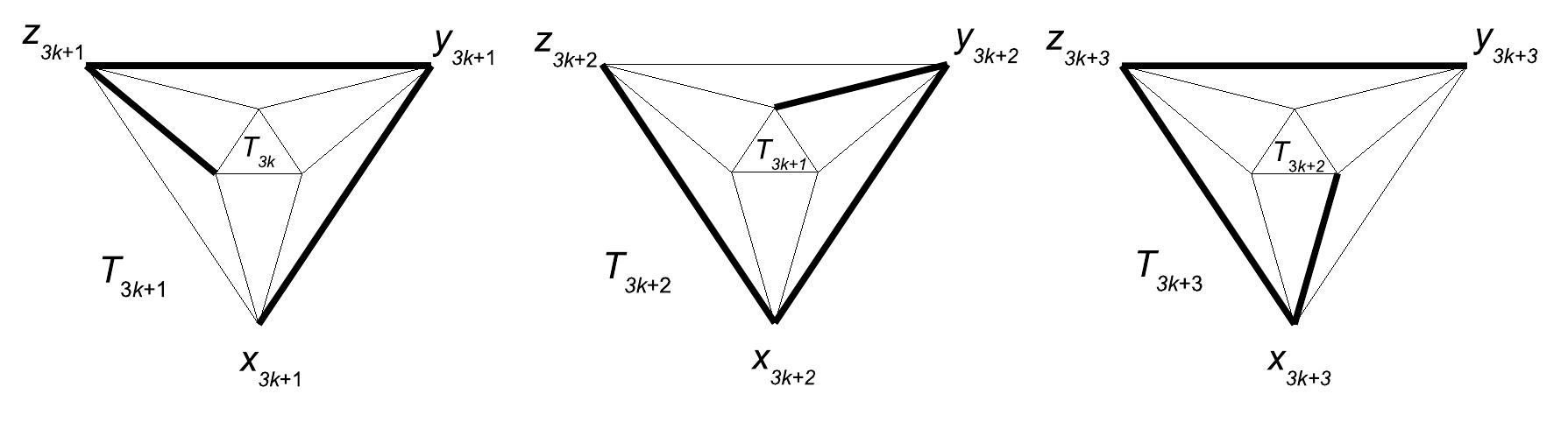}
 \caption{\small $t=3k$ (left), $t=3k+1$ (centre) and $t=3k+2$ (right).}
  \label{fig:T'(t+1)} 
\end{center}
\end{figure}

Trees $S_{t+1}$ and $R_{t+1}$ are obtained from $S_t$ and $R_t$ in the same way with $S_0=S$ and $R_0=R$ respectively. We claim that, for each integer $n \geq 0$, cycle $\rho$ is the only cycle in $C_{n}$ which is a fundamental cycle of either $T_{n}, S_{n}$ or $R_{n}$. Therefore $T_{n}, S_{n}$ and $R_{n}$ form a connected component of $G_{n}$ which implies that  $T(G_{n}, C_{n})$ is  disconnected. 


\end{document}